\documentclass[11pt]{article}

\usepackage[margin=1in]{geometry}
\usepackage{amsmath}
\usepackage{amssymb}
\usepackage{amsthm}
\usepackage{mathtools}
\usepackage{booktabs}
\usepackage{enumitem}
\usepackage{microtype}
\usepackage[hidelinks]{hyperref}
\usepackage{xcolor} 
\usepackage{soul}   

\allowdisplaybreaks

\newtheorem{theorem}{Theorem}[section]
\newtheorem{lemma}[theorem]{Lemma}

\newtheorem{corollary}[theorem]{Corollary}

\theoremstyle{definition}
\newtheorem{definition}[theorem]{Definition}

\theoremstyle{remark}

\newcommand{\F}{\mathbb{F}}
\newcommand{\Z}{\mathbb{Z}}
\newcommand{\C}{\mathbb{C}}
\newcommand{\widehatG}{\widehat{G}}

\title{
    \textbf{
    Negative Latin-Square-Type Partial Difference Sets\\
    in Non-Elementary Abelian \(2\)-Groups
    }
}

\author{
Jos\'e S\'anchez Mench\'en, 
Dan Schwarz, Sally Brouhard,
Jacob Martin,\\
Beckett Rebele-Henry and 
Sammie Ritchie
}

\date{}

\begin{document}

\maketitle

\begin{abstract}
Using cubic cyclotomic classes, character theory, and product
constructions motivated by generalized Denniston partial difference
sets, we construct negative Latin-square-type partial difference sets
in $\F_{2^6}^{+}\times\Z_4^4$, $\Z_4^4\times\F_{2^{10}}^{+}$, $\F_{2^{10}}^{+}\times\Z_8^4$, $\Z_8^4\times\F_{2^{14}}^{+}$,
and $\Z_4^4\times\Z_{16}^2$. The first four constructions replace cubic cyclotomic partitions by
partitions of non-elementary abelian \(2\)-groups having the same
character-value patterns. To the best of our knowledge, these are the
first partial difference sets with the stated parameters in the
indicated groups.

\end{abstract}

\medskip


\section{Introduction}

Partial difference sets provide a common framework for several
combinatorial structures whose associated character sums or
intersection numbers take only two values. Examples include
projective two-weight codes, projective two-intersection sets, and
strongly regular graphs with two distinct restricted eigenvalues.

In particular, a regular partial difference set (PDS) in a finite group gives rise to a strongly regular Cayley graph. If \(D\) is a regular
PDS in a group \(G\), the corresponding Cayley
graph has vertex set \(G\), with two vertices \(x,y\in G\) adjacent
whenever $x-y\in D.$
See, for example, \cite{MaSurvey} for background on partial difference
sets and their connections with strongly regular graphs.

A large portion of the literature concentrates on the Latin-square-type
and negative Latin-square-type parameter families. Generalized
Denniston PDSs overlap with these families. These constructions are commonly obtained in elementary
abelian groups by using finite fields, cyclotomic classes, and
character theory; see \cite{denniston1969,GenDennPaper}.

Constructions in non-elementary abelian groups are less common.
Nevertheless, several authors have shown that cyclotomic constructions
can sometimes be transferred to non-elementary abelian \(p\)-groups;
see, for example,
\cite{davisxiang2000,polhill2008generalizations,polhill2009,
polhillDavisSmith2013}.

The purpose of this paper is to apply these ideas to construct new  generalized
Denniston-type constructions. More precisely, we use partitions of
\(\Z_4^4\) and \(\Z_8^4\) whose character-value patterns agree with
those of appropriate cubic cyclotomic classes \cite{private}. These partitions can
therefore be substituted for the cyclotomic classes in certain product
constructions.

The main results from this paper are the construction of four negative Latin-square-type PDSs in groups containing a non-elementary abelian \(2\)-group factor, and the creation of an additional negative Latin-square-type PDSs in \(\Z_4^4\times\Z_{16}^2\) by combining one of our partitions with a construction of Polhill, Davis, and Smith \cite{polhillDavisSmith2013}.


\section{Preliminaries}
\label{sec:preliminaries}

Throughout this paper, all groups are finite and are written
additively. We begin by recalling the definition of a partial
difference set and the parameter families relevant to our
constructions. We then introduce the character-theoretic criterion
that will be used to verify our sets. Finally, we review cubic
cyclotomic classes, which provide the character-value patterns that
motivate the constructions in the remainder of the paper.


\subsection{Partial difference sets}

\begin{definition}
A subset D of size $k$ from a finite multiplicative group G of order $v$ is called a $(v,k,\lambda,\mu)$ PDS  if the multiset of differences of D $\{ d_1-d_2 | d_1,d_2 \in D, d_1 \neq d_2 \}$, contains all non-identity elements in D $\lambda$ times and each non-identity element of G but not in D exactly $\mu$ times \cite{MaSurvey}.
\end{definition}

Among the many possible PDS parameter sets, two families appear
frequently in the literature and are especially relevant to our
constructions.

\begin{definition}
A PDS has \emph{Latin-square-type parameters} if
its parameters have the form
\[
\left(
n^2,\,
r(n-1),\,
n+r^2-3r,\,
r^2-r
\right).
\]

A PDS has
\emph{negative Latin-square-type parameters} if its parameters have
the form
\[
\left(
n^2,\,
r(n+1),\,
-n+r^2+3r,\,
r^2+r
\right).
\]
\end{definition}

The parameter sets obtained in this paper belong to the negative
Latin-square-type family. Consequently, after establishing the PDS
property, we will verify that the parameters can be written in the
second of the two forms above.


\subsection{Character theory}

Verifying that a given subset D of a finite abelian group G is PDS directly from the multiset of differences
can be difficult. For subsets of finite abelian groups, character
theory provides a more efficient method.

Let \(G\) be a finite abelian group. A \emph{character} of \(G\) is
a group homomorphism
\[
\chi:G\longrightarrow\C^\times,
\]
where \(\C^\times\) denotes the multiplicative group of nonzero
complex numbers. The collection of all characters of \(G\) forms the
character group \(\widehatG\).

The character satisfying $\chi_0(g)=1$ for every $g\in G$ is called the \emph{principal character}. All other characters are
called \emph{nonprincipal}. For a subset \(D\subseteq G\), we write
\[
\chi(D)=\sum_{d\in D}\chi(d).
\]
 The following Lemma is useful in determining whether a subset of an abelian group is a PDS.

\begin{lemma}\label{lem:character} A subset D of a finite abelian group G of order v is a $(v, k, \lambda, \mu)$ PDS if and only if
    
\[
\chi(D) = \frac{\lambda - \mu \pm \sqrt{(\lambda - \mu)^2 + 4(k-\mu)}}{2} \quad \text{for every nonprincipal character } \chi \text{ of } G
\]
\end{lemma}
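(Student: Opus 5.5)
The plan is to work in the group ring \(\C[G]\), writing \(D\) also for the element \(\sum_{d\in D} d\) and \(D^{(-1)}=\sum_{d\in D}(-d)\), and to read the statement as the standard one for \emph{regular} PDSs. So I assume throughout that \(0\notin D\) and \(D=-D\), and that the parameters satisfy the counting identity \(k(k-1)=\lambda k+\mu(v-k-1)\); these are the implicit hypotheses under which the lemma holds. For the reverse direction I will also use \(k\neq\mu\); if \(k=\mu\), the roots in the displayed formula are \(0\) and \(\lambda-\mu\) and the argument below does not apply. The multiset of differences \(\{d_1-d_2\}\) corresponds exactly to the group ring element \(DD^{(-1)}-k\cdot 0\). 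Hence \(D\) is a \((v,k,\lambda,\mu)\) PDS if and only if
\[
DD^{(-1)} = k\cdot 0 + \lambda D + \mu\,(G - D - 0),
\]
which, using \(D=D^{(-1)}\), becomes \(D^2=(k-\mu)\,0+(\lambda-\mu)D+\mu G\).

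\emph{Forward direction.} Extend each character \(\chi\) linearly to \(\C[G]\), so that it becomes a ring homomorphism. For nonprincipal \(\chi\) we have \(\chi(G)=0\), and \(D=-D\) makes \(\chi(D)\) real. Applying \(\chi\) to the identity above gives
\[
\chi(D)^2-(\lambda-\mu)\chi(D)-(k-\mu)=0.
\]
Solving this quadratic yields exactly the two values in the statement.

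\emph{Reverse direction.} Suppose every nonprincipal \(\chi\) takes one of the two roots, so \(\chi(D)^2=(\lambda-\mu)\chi(D)+(k-\mu)\). Consider
\[
A=D^2-(k-\mu)\,0-(\lambda-\mu)D-\mu G .
\]
For every nonprincipal \(\chi\), the hypothesis gives \(\chi(A)=0\). For the principal character, \(\chi_0(A)=k^2-(k-\mu)-(\lambda-\mu)k-\mu v\), which vanishes by the counting identity. By Fourier inversion, \(a_g=\frac1v\sum_{\chi}\chi(A)\overline{\chi(g)}\), so every coefficient of \(A\) is zero and \(A=0\). Comparing coefficients in \(D^2=(k-\mu)0+(\lambda-\mu)D+\mu G\) then shows:
\begin{itemize}
\item each nonzero element of \(D\) appears \(\lambda\) times as a difference;
\item each nonzero element outside \(D\) appears \(\mu\) times.
\end{itemize}
The coefficient of \(0\) is \(k\), consistent with \(D=-D\).

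The main obstacle is not the computation but the hypotheses. Without \(D=-D\), \(\chi(D)\) need not be real and \(D^2\neq DD^{(-1)}\), so the converse needs care. I will state explicitly that the lemma is applied to regular candidate sets, for which \(0\notin D\) and \(D=-D\) are checked directly, and that \(k\) is verified to satisfy the principal-character identity.
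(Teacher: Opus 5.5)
The paper does not prove this lemma itself; it cites Ma's survey for the standard characterization of regular PDSs, and your group-ring argument (apply characters to $D^2=(k-\mu)\cdot 0+(\lambda-\mu)D+\mu G$, then recover that identity by Fourier inversion) is exactly that standard proof, correct under the regularity hypotheses $0\notin D$, $D=-D$, $|D|=k$ that you rightly make explicit. Two of your caveats are unnecessary: nothing in your converse uses $k\neq\mu$ (when $k=\mu$ the roots $0$ and $\lambda-\mu$ still satisfy $\chi(D)^2=(\lambda-\mu)\chi(D)+(k-\mu)$), and the counting identity need not be assumed, because Fourier inversion already forces $A$ to be a constant multiple of $G$ while its coefficient at $0$ is $k-(k-\mu)-\mu=0$, so $A=0$ and $\chi_0(A)=0$ follow automatically.
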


This is the standard character-theoretic characterization of regular
PDSs in finite abelian groups; see
\cite{MaSurvey}. Thus, rather than counting all differences directly, it is enough to
show that every nonprincipal character sum takes one of two specified
values. The constructions in this paper are designed precisely so
that their character sums exhibit this two-valued behavior.


\subsection{Cyclotomic classes}

We next describe the principal source of the character-value patterns
used in our constructions. Although cyclotomic classes are defined
using the multiplicative structure of a finite field, we will regard
them as subsets of the additive group of that field.

Let \(q\) be a prime power, let \(\omega\) be a primitive element of
\(\F_q\), and suppose that \(e\mid(q-1)\). The \emph{cyclotomic
classes of order \(e\)} in \(\F_q\) are
\[
C_i^{(e,q)}
=
\omega^i\langle\omega^e\rangle,
\qquad
0\leq i<e.
\]

Equivalently,
\[
C_i^{(e,q)}
=
\left\{
\omega^{i+ej}:
0\leq j<\frac{q-1}{e}
\right\}.
\]

These classes partition the nonzero field elements:
\[
\F_q^\times
=
C_0^{(e,q)}
\cup
C_1^{(e,q)}
\cup\cdots\cup
C_{e-1}^{(e,q)}.
\]

In this paper, we use cyclotomic classes of order \(3\), which are
called \emph{cubic cyclotomic classes}. Since $2^{2n}\equiv 1\pmod 3$, the quantity \(2^{2n}-1\) is divisible by \(3\), so cubic
cyclotomic classes exist in every field \(\F_{2^{2n}}\).

The following result describes when these classes themselves form
PDSs in the additive group of the field; see \cite{BrouwerCyclotomy}. .

\begin{theorem}
\label{thm:cyclotomy}
Let \(n\) be a positive integer. The three cubic cyclotomic classes $C_i^{(3,2^{2n})}$, $0\leq i\leq 2$, partition the nonzero elements of \(\F_{2^{2n}}\), and each class has
cardinality $\frac{2^{2n}-1}{3}.$

If \(n\) is even, then each \(C_i^{(3,2^{2n})}\), viewed as a subset of the additive
group \(\F_{2^{2n}}^+\), is a negative Latin-square-type PDS with
parameters
\[
\left(
2^{2n},
\frac{2^{2n}-1}{3},
-2^n+(\frac{2^n-1}{3})^2+3(\frac{2^n-1}{3}),
(\frac{2^n-1}{3})^2+(\frac{2^n-1}{3})
\right).
\]

If \(n\) is odd, then each \(C_i^{(3,2^{2n})}\), viewed as a subset of
\(\F_{2^{2n}}^+\), is a Latin-square-type PDS with parameters
\[
\left(
2^{2n},
\frac{2^{2n}-1}{3},
2^n+(\frac{2^n+1}{3})^2-3(\frac{2^n+1}{3}),
(\frac{2^n+1}{3})^2-(\frac{2^n+1}{3})
\right).
\]
\end{theorem}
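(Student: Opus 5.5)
Our plan is to use Lemma~\ref{lem:character}, computing the character sums of each class through Gauss sums. Partition and cardinality are immediate: \(\F_{2^{2n}}^\times\) is cyclic of order \(q-1\) with \(q=2^{2n}\), and \(\langle\omega^3\rangle\) has index \(3\), so its cosets \(C_i\) have size \((q-1)/3\). For the PDS property, the additive characters of \(\F_q\) are \(\psi_a(x)=(-1)^{\mathrm{Tr}(ax)}\), \(a\in\F_q\), with \(\mathrm{Tr}\) the absolute trace. For \(a\in C_j\) we have \(\psi_a(C_i)=\psi_1(C_{i+j})\). So it suffices to compute the three Gaussian periods \(\eta_i=\psi_1(C_i)\) and show that, as \(i\) varies, they take only two values, with the multiplicities matching the stated parameters.

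To compute the periods, let \(\theta\) be a multiplicative character of order \(3\) and let \(g(\theta)=\sum_{x\neq 0}\theta(x)\psi_1(x)\) be its Gauss sum. Orthogonality gives
\[
3\eta_i=-1+\overline{\theta}(\omega^i)g(\theta)+\theta(\omega^i)g(\overline{\theta}).
\]
The key step is to evaluate \(g(\theta)\) exactly. Since \(3\mid 2^n+1\) when \(n\) is odd, and \(3\mid 2^n-1\) when \(n\) is even, we have \(2^{n'}\equiv -1\pmod 3\) for \(n'=1\) in all cases. Hence \(-1\) is a power of \(p=2\) modulo the order \(3\). This is the semiprimitive (Stickelberger) case, which gives \(g(\theta)=(-1)^{s-1}\sqrt{q}\) with \(s=2n/2=n\). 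Concretely, \(g(\theta)=2^n\) if \(n\) is odd and \(g(\theta)=-2^n\) if \(n\) is even, and likewise for \(\overline\theta\).

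This evaluation is the main obstacle. If we do not simply cite the semiprimitive theorem, we would prove it directly as follows. Because \(\theta(x^2)=\overline\theta(x)\) and \(\psi_1(x^2)=\psi_1(x)\), we get \(g(\theta)=g(\overline\theta)\). This sum is real, and \(|g|^2=q\), so \(g=\pm2^n\). The sign can be obtained by Hasse--Davenport lifting from \(\F_4\). There \(g(\theta)=\omega+\omega^2\) in terms of characters, which gives \(g_{\F_4}=2\). The lifting relation \(-g_{\F_{4^n}}=(-g_{\F_4})^n\) then yields \(g=-(-2)^n\).

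Substituting, with \(\theta(\omega^i)+\overline\theta(\omega^i)\) equal to \(2\) for \(i\equiv0\) and \(-1\) otherwise, gives
\[
\eta_0=\frac{-1-2(-2)^n}{3},\qquad \eta_1=\eta_2=\frac{-1+(-2)^n}{3}.
\]
These are two values, so Lemma~\ref{lem:character} applies. To finish, we match the parameters. For \(n\) even, set \(r=(2^n-1)/3\). The two values are \(r\) and \(-2r-1\), i.e.\ \(r\) and \(r-2^n\). These are the roots of \(x^2-(\lambda-\mu)x-(k-\mu)=0\) with \(k=r(2^n+1)=(q-1)/3\), \(\lambda-\mu=2r-2^n\) and \(k-\mu=r(2^n-r)\), which yields \(\mu=r^2+r\) and \(\lambda=-2^n+r^2+3r\). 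For \(n\) odd, the analogous computation with \(r=(2^n+1)/3\) gives the values \(-r\) and \(2^n-r\), which produce the Latin-square-type parameters.
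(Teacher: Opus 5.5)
Your proof is correct, and it takes a genuinely different route from the paper. The paper gives no proof of this theorem; it simply quotes it from the cyclotomy literature \cite{BrouwerCyclotomy}, where it is the semiprimitive case $2\equiv-1\pmod 3$. You derive it from scratch. You express the periods through the cubic Gauss sum, show that sum is real via $x\mapsto x^2$ with modulus $2^n$, and fix its sign as $g=-(-2)^n$ by Hasse--Davenport lifting from $\F_4$. This gives $\eta_0=\frac{-1-2(-2)^n}{3}$ and $\eta_1=\eta_2=\frac{-1+(-2)^n}{3}$, and your root matching with $k=r(2^n\pm1)$ recovers exactly the stated $\lambda,\mu$ in both parities.

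The citation buys brevity. Your computation buys the explicit Gaussian periods: for $n=3,5,7$ they give $(5,-3,-3)$, $(21,-11,-11)$, $(85,-43,-43)$ directly, with the exceptional value on $C_{-j}$ when $a\in C_j$. So Lemma~\ref{lem:cyclotomic-character-patterns} would follow without the counting argument based on $\sum_i\chi(C_i)=-1$.

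A few things to tidy:
\begin{itemize}
\item Over $\F_4$ the Gauss sum is $1-\zeta-\zeta^2=2$, with $\zeta$ a primitive cube root of unity (the trace of $1$ is $0$ and the other two nonzero elements have trace $1$). Writing it as $\omega+\omega^2$ is wrong, since that equals $-1$, and it clashes with $\omega$ as the primitive element.
\item For Hasse--Davenport, note that $\theta=\theta_0\circ N_{\F_{4^n}/\F_4}$ for a cubic character $\theta_0$ of $\F_4$ (the norm is onto and there are only two nontrivial cubic characters), and that $\psi_1=\psi_1^{\F_4}\circ\mathrm{Tr}_{\F_{4^n}/\F_4}$.
\item Before invoking Lemma~\ref{lem:character}, record that $0\notin C_i$ and $-C_i=C_i$, which is automatic in characteristic $2$.
\end{itemize}
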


For our later product constructions, the most important consequence
of Theorem~\ref{thm:cyclotomy} is not only that these cyclotomic
classes are PDSs, but also that their character sums follow a highly
structured pattern. We record the particular cases needed below.

\begin{lemma}
\label{lem:cyclotomic-character-patterns}
For every nonprincipal additive character of the relevant field, the
following character-value patterns hold, up to a permutation of the
three indices, $\chi$:
\[
\left(
\chi(C_0^{(3,64)}),
\chi(C_1^{(3,64)}),
\chi(C_2^{(3,64)})
\right)
=
(5,-3,-3),
\]
\[
\left(
\chi(C_0^{(3,1024)}),
\chi(C_1^{(3,1024)}),
\chi(C_2^{(3,1024)})
\right)
=
(21,-11,-11),
\]
and
\[
\left(
\chi(C_0^{(3,16384)}),
\chi(C_1^{(3,16384)}),
\chi(C_2^{(3,16384)})
\right)
=
(85,-43,-43).
\]
\end{lemma}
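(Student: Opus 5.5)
We reduce the lemma to two facts: the three sums of a nonprincipal character over the cyclotomic classes satisfy a linear relation, and each sum can take only two values, which Lemma~\ref{lem:character} supplies via Theorem~\ref{thm:cyclotomy}. The fields here are \(\F_{2^{2n}}\) with \(n=3,5,7\). Throughout, fix a nonprincipal additive character \(\chi\) of \(\F_{2^{2n}}^+\).

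First, since \(C_0\cup C_1\cup C_2=\F_{2^{2n}}^\times\) and \(\chi\) is nonprincipal, orthogonality gives \(\chi(\F_{2^{2n}})=0\). Hence
\[
\chi(C_0)+\chi(C_1)+\chi(C_2)=-1 .
\]

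Second, for odd \(n\), Theorem~\ref{thm:cyclotomy} says each \(C_i\) is a Latin-square-type PDS with \(N=2^n\) and \(r=(2^n+1)/3\). We then have \(\lambda-\mu=N-2r\) and \(k-\mu=r(N-1)-r^2+r=rN-r^2\). The discriminant is therefore
\[
(N-2r)^2+4(rN-r^2)=N^2,
\]
so Lemma~\ref{lem:character} gives
\[
\chi(C_i)\in\{N-r,\,-r\}.
\]
For \(n=3,5,7\) this set is \(\{5,-3\}\), \(\{21,-11\}\), and \(\{85,-43\}\), respectively. Note that these values match the lemma's patterns and differ from what the ``\(n\) even'' case would predict. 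The moduli \(64,1024,16384\) are \(2^{2n}\) with \(n\) odd, so this is the Latin-square case.

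Third, we combine the two facts. Suppose \(m\) of the three values equal \(N-r\) and the remaining \(3-m\) equal \(-r\). Then the sum is \(mN-3r=mN-(N+1)\). Setting this equal to \(-1\) forces \(mN=N\), so \(m=1\). Exactly one class therefore has sum \(N-r\) and the other two have sum \(-r\). After permuting the indices, this is precisely the stated pattern \((N-r,-r,-r)\) for each of the three fields.

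The only delicate point is applying the correct parity case of Theorem~\ref{thm:cyclotomy}. Beyond that, the discriminant simplification to a perfect square \(N^2\) is routine, and the counting argument in the third step is immediate. If one preferred not to rely on Theorem~\ref{thm:cyclotomy}, an alternative route is through Gauss periods. Since \(-1\) is a power of \(2\) modulo \(3\), this is the semiprimitive case, in which the cubic Gauss sums equal \(\pm 2^n\) up to sign conventions. That gives the same values directly, but the PDS route above is shorter.
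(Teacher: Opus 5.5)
Your proposal is correct and follows essentially the same route as the paper: apply Lemma~\ref{lem:character} to the odd-\(n\) (Latin-square) case of Theorem~\ref{thm:cyclotomy} to get the two admissible values, then use \(\chi(C_0)+\chi(C_1)+\chi(C_2)=-1\) to force one large and two small values. The only difference is that you carry out the discriminant simplification to \(N^2\) and the counting step \(mN=N\) uniformly in \(N=2^n\), where the paper just asserts the conclusion for each of the three cases.
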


\begin{proof}
Applying Lemma~\ref{lem:character} to the parameter sets in
Theorem~\ref{thm:cyclotomy} gives the two possible character values
for each cyclotomic class.

Moreover, the three cubic cyclotomic classes partition the nonzero
elements of the relevant field. Therefore, for every nonprincipal
additive character \(\chi\),
\[
\sum_{i=0}^{2}\chi\left(C_i^{(3,q)}\right)
=
\chi(\F_q^\times)
=
-1.
\]

In each of the three cases, the only way for three allowed character
values to have sum \(-1\) is for the larger value to occur once and
the smaller value to occur twice. This gives the stated patterns.
\end{proof}

The significance of these patterns is that our partitions of
\(\Z_4^4\) and \(\Z_8^4\) will produce the same distributions of
character values. This allows those partitions to replace cubic
cyclotomic classes in the product constructions developed later.


\subsection{The generalized Denniston framework}

Generalized Denniston PDSs are constructed by
combining compatible cyclotomic partitions in additive groups of
finite fields. The construction described by Li, Davis, Huczynska,
Johnson, and Polhill \cite{GenDennPaper} depends on the character-value
patterns of the relevant partitions.

The essential observation for the present paper is that the same
character-sum calculations remain valid when one cyclotomic
partition is replaced by a partition in another abelian group having
the same character-value pattern. Our constructions use this
replacement principle with partitions of \(\Z_4^4\) and \(\Z_8^4\).


\section{Partitions in Non-Elementary Abelian Groups}
\label{sec:partitions}

We use the following partitions.

\begin{theorem}
\label{thm:partitions}
There exist subsets $E_0,E_1,E_2\subseteq\Z_4^4$
that partition $\Z_4^4\setminus\{0\},$
such that each \(E_i\) is a $(256,85,24,30)\text{-PDS}.$

There also exist subsets $H_0,H_1,H_2\subseteq\Z_8^4$
that partition $\Z_8^4\setminus\{0\},$
such that each \(H_i\) is a $(4096,1365,440,462)\text{-PDS}.$
\end{theorem}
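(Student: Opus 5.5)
Because this is an existence statement, the plan is to exhibit explicit partitions and check them with Lemma~\ref{lem:character}. The target parameters $(256,85,24,30)$ and $(4096,1365,440,462)$ are exactly those of the cubic classes of $\F_{2^8}$ and $\F_{2^{12}}$ from Theorem~\ref{thm:cyclotomy}, with $n=4$ and $n=6$. So the character values of each $E_i$ should be $5$ or $-11$, and those of each $H_i$ should be $21$ or $-43$. As in the proof of Lemma~\ref{lem:cyclotomic-character-patterns}, the three values for any nonprincipal $\chi$ must sum to $-1$. This forces the patterns $(5,-11,5)$ and $(21,21,-43)$ up to permutation, so each nonprincipal character has exactly one ``small'' class. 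The whole argument therefore reduces to building a partition of $G\setminus\{0\}$, for $G=\Z_4^4$ or $\Z_8^4$, with this pattern.

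To build it, I would use the standard lifting technique of Davis--Xiang and Polhill for non-elementary abelian groups. Write $G=\Z_{2^s}^4$ with $s=2,3$, and view it through the chain of subgroups $2^jG$. Every nonzero $g$ can be written as $2^j u$ with $u$ of maximal order $2^s$, and $u$ is determined modulo the subgroup of order-$2^j$ elements. The order-$2^s$ part modulo $2$ looks like $\F_2^4\setminus\{0\}=\F_{16}^\times$, or like the $15$ projective points; the full group of maximal-order cyclic subgroups is parametrized by a ``Galois ring'' $GR(2^s,4)$ of projective points. Using the Teichmüller set of $GR(2^s,4)$, I would define $E_i$ or $H_i$ as a union of cyclic-subgroup pieces. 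Each piece is the set of elements of order exactly $2^{j}$ lying in lines $\langle u\rangle$, with the assignment of lines to classes chosen by cubic-residue class of $u$ in the Teichmüller group of order $15$, which is divisible by $3$. A shift of class index at each layer $j$ is allowed. The characters of $G$ are indexed by $G$ itself via a nondegenerate pairing, and the character sum of a union of such lines is computable. On a line $\langle u\rangle$, the sum of $\chi$ over elements of order exactly $2^j$ is $2^{j-1}$, $-2^{j-1}$ or $0$, according to the order of $\chi(u)$ relative to $2^j$. The sum $\chi(E_i)$ thus becomes a weighted count of lines in class $i$ that are ``orthogonal to $\chi$ mod $2^t$'', where $2^t$ is the order of $\chi$.

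For $\chi$ of each order $2^t$, I would then evaluate these counts layer by layer. They reduce to counting points of a cubic class on a hyperplane of $PG(3,2)$ (or of its Galois-ring analogue). These are the counts governed by the Gauss periods of the cubic classes of $\F_{16}$ and $\F_{64}$, which are known two-valued quantities. The layer shifts would then be chosen so that the totals come out in $\{5,-11\}$, respectively $\{21,-43\}$. A useful sanity check is $|E_i|=85$: one needs $85=5+16\cdot 5$, i.e.\ the $5$ lines of class $i$ in the order-$2$ layer plus the $80$ order-$4$ elements coming from class-$i$ lines, with each of the $5$ classes of lines contributing $16$ elements. For $\Z_8^4$ the analogous count is $1365=5+80+1280$.

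The main obstacle is this last step: making the layered character sums land exactly on the two allowed values for every order of $\chi$ simultaneously. If a clean algebraic choice of shifts fails, my fallback is computational. Since the paper cites a private communication for these partitions, I would search over the cubic-class assignments with shifts invariant under a Singer-type automorphism of order $5$ or $15$ acting on $G$. I would then verify the two-valuedness directly by computing all $255$ (respectively $4095$) nonprincipal character sums, which by Lemma~\ref{lem:character} certifies that each $E_i$ and $H_i$ is a PDS with the stated parameters.
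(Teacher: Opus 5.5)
Your construction cannot produce the required partitions, and the obstruction is structural, not a matter of choosing the layer shifts.

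\textbf{Why the construction fails.} The class of a line $\langle u\rangle$ is fixed by the cubic class of the Teichm\"uller component of $u$, hence only by $u \bmod 2$. No cubic-residue rule on units can see more, since cubing is bijective on the $2$-part $1+2\,GR(2^s,4)$ of the unit group.

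Consequently, for each layer $j\ge 2$ the order-$2^j$ part of $E_i$ (or $H_i$) is a union of full cosets. The elements $2^{s-j}u$ with $u\equiv p \pmod 2$ fill the coset $2^{s-j}\tilde p+2^{s-j+1}G$. A character $\chi$ of maximal order $2^s$ is nontrivial on $2^mG$ for every $m<s$, so all these cosets contribute $0$. Then $\chi(E_i)$ collapses to the sum of $\chi$ over the five involutions in $E_i$. That is a sum of five signs (in fact a cubic Gauss period of $\F_{16}$, so $1$ or $-3$), which is never $-11$, let alone $-43$.

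Hence all $240$ characters of order $4$ of $\Z_4^4$, and all characters of order $8$ of $\Z_8^4$, violate Lemma~\ref{lem:character}. Permuting class indices between layers changes nothing.

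For $\Z_4^4$ the obstruction is even more general. Suppose the order-$4$ part of each class is a union of cosets of $2G$. Then an order-$4$ character needs $|E_a\cap 2G|\ge 11$ for the class with value $-11$, and $|E_i\cap 2G|\ge 5$ for the other two. That is at least $21$ of the only $15$ involutions. So a valid partition must split cosets of $2G$ between classes, and the appendix partition does: $(1,0,0,0)\in E_0$ while $(1,0,2,0)\in E_2$. (Also, the residue field for $\Z_8^4$ is still $\F_{16}$; $\F_{64}$ plays no role.)

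\textbf{The fallback and what the paper does.} Your fallback searches the same family of Teichm\"uller cubic-class assignments with shifts, so it fails too. As written, the proposal does not establish existence.

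The paper's proof is simply exhibition. The $\Z_4^4$ partition is listed in Appendix~\ref{app:Esets}, taken from the cited private communication, and checked by direct computation. The $\Z_8^4$ partition comes from a computation that is not reproduced.

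Your closing certificate is exactly the right verification: check the partition property, and that all $255$ (resp.\ $4095$) nonprincipal character sums lie in $\{5,-11\}$ (resp.\ $\{21,-43\}$). But it must be applied to actual sets. These can be the explicit ones, or sets from a rule in which the class of a maximal-order element depends on more than the reduction of its line modulo $2$, for instance on the $1+2x$ component of the Galois-ring unit.
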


%
%

The first partition is explicitly listed in Appendix~\ref{app:Esets}, following the construction in \cite{private}. A direct computation verifies that the sets $(E_0,E_1,E_2)$ partition the nonzero elements of $\Z_4^4$ and satisfy the stated parameters. The partition of $\Z_8^4\setminus{\{0\}}$ into $(H_0,H_1,H_2)$ is omitted from the appendix due to the large size of the sets.
 \\

The character patterns of these partitions are determined by their
PDS parameters.

\begin{lemma}
\label{lem:partition-character-patterns}
Let \(\chi\) be a nonprincipal character of \(\Z_4^4\). Then there
is a unique index \(a\in\{0,1,2\}\) such that $\chi(E_a)=-11,$ while $\chi(E_i)=5$ for $i\neq a$.

Let \(\chi\) be a nonprincipal character of \(\Z_8^4\). Then there
is a unique index \(b\in\{0,1,2\}\) such that
\[
\chi(H_b)=-43,
\]
while
\[
\chi(H_i)=21
\qquad
\text{for }i\neq b.
\]
\end{lemma}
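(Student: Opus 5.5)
We follow the proof of Lemma~\ref{lem:cyclotomic-character-patterns}: Lemma~\ref{lem:character} restricts each character value to two possibilities, and the partition property fixes how often each value occurs.

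\textbf{Step 1: the two allowed values.} For the $(256,85,24,30)$ PDS we have $\lambda-\mu=-6$ and $k-\mu=55$. The discriminant is $(\lambda-\mu)^2+4(k-\mu)=36+220=256$. So Lemma~\ref{lem:character} shows that for every nonprincipal $\chi$ of $\Z_4^4$ and every $i$,
\[
\chi(E_i)\in\left\{\tfrac{-6+16}{2},\tfrac{-6-16}{2}\right\}=\{5,-11\}.
\]
For the $(4096,1365,440,462)$ PDS we have $\lambda-\mu=-22$ and $k-\mu=903$. The discriminant is $484+3612=4096=64^2$. So every nonprincipal $\chi$ of $\Z_8^4$ satisfies
\[
\chi(H_i)\in\{21,-43\}.
\]

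\textbf{Step 2: the sum constraint.} By Theorem~\ref{thm:partitions}, $E_0,E_1,E_2$ partition $\Z_4^4\setminus\{0\}$. For nonprincipal $\chi$, orthogonality gives $\chi(\Z_4^4)=0$, hence
\[
\chi(E_0)+\chi(E_1)+\chi(E_2)=\chi(\Z_4^4)-\chi(0)=-1.
\]
Similarly, $\sum_i\chi(H_i)=-1$ for nonprincipal $\chi$ of $\Z_8^4$.

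\textbf{Step 3: counting.} Let $m$ be the number of indices with $\chi(E_i)=-11$. The sum condition becomes $5(3-m)-11m=-1$, so $15-16m=-1$ and $m=1$. This gives the unique index $a$ with $\chi(E_a)=-11$, and $\chi(E_i)=5$ for the other two indices. For $\Z_8^4$, let $m$ count the indices with $\chi(H_i)=-43$. Then $63-64m=-1$, so $m=1$, which gives the unique index $b$.

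There is no real obstacle. The only care needed is in the arithmetic for the discriminants and in recalling that nonprincipal characters sum to zero over the whole group.
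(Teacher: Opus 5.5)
Your proposal is correct and follows the paper's argument: Lemma~\ref{lem:character} limits each value to $\{5,-11\}$ (respectively $\{21,-43\}$), and the partition gives $\sum_i\chi(E_i)=-1$ (respectively $\sum_i\chi(H_i)=-1$), which forces exactly one exceptional index. Your counting equation $15-16m=-1$ (respectively $63-64m=-1$) simply writes out the paper's ``only way'' step explicitly, and it yields uniqueness as well as existence.
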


\begin{proof}
For a \((256,85,24,30)\)-PDS, Lemma~\ref{lem:character}
gives
\[
\chi(E_i)\in
\left\{
\frac{-6+\sqrt{(-6)^2+4(85-30)}}{2},
\frac{-6-\sqrt{(-6)^2+4(85-30)}}{2}
\right\}.
\]
Since $(-6)^2+4(55)=256,$ the possible values are $5$ and $-11$. Because \(E_0,E_1,E_2\) partition the nonzero elements of \(\Z_4^4\), $\chi(E_0)+\chi(E_1)+\chi(E_2)=-1.$
The only way three numbers, each equal to \(5\) or \(-11\), can sum to \(-1\) is for one value to be \(-11\) and the other two to be
\(5\).

Similarly, for a \((4096,1365,440,462)\)-PDS, the two character values are $21$ and $-43$.

Since
\[
\chi(H_0)+\chi(H_1)+\chi(H_2)=-1,
\]
exactly one value is \(-43\) and the other two are \(21\).
\end{proof}

\begin{corollary}
\label{cor:adjoined-zero-patterns}
For a nonprincipal character of \(\Z_4^4\), the character values on $E_i\cup\{0\}$
have pattern $(-10,6,6),$ up to a permutation of the indices. 
\end{corollary}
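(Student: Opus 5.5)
This follows directly from Lemma~\ref{lem:partition-character-patterns} together with the additivity of character sums over disjoint sets. The plan is to fix a nonprincipal character \(\chi\) of \(\Z_4^4\), let \(a\) be the unique index supplied by Lemma~\ref{lem:partition-character-patterns}, and compute \(\chi(E_i\cup\{0\})\) for each \(i\in\{0,1,2\}\).

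The key observation is that \(0\notin E_i\) for every \(i\), since the sets \(E_0,E_1,E_2\) partition \(\Z_4^4\setminus\{0\}\) by Theorem~\ref{thm:partitions}. Hence \(E_i\cup\{0\}\) is a disjoint union, and
\[
\chi(E_i\cup\{0\})=\chi(E_i)+\chi(0)=\chi(E_i)+1,
\]
because every character is a homomorphism and so sends the identity to \(1\).

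Substituting the values from Lemma~\ref{lem:partition-character-patterns} gives \(\chi(E_a\cup\{0\})=-11+1=-10\), and \(\chi(E_i\cup\{0\})=5+1=6\) for the two indices \(i\neq a\). The resulting triple is therefore \((-10,6,6)\) up to a permutation of the indices, with the position of \(-10\) determined by \(a\).

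There is no real obstacle in this argument. The only point to state explicitly is the disjointness of \(E_i\) and \(\{0\}\), which comes from the partition property. As a consistency check, the three values sum to \(-10+6+6=2\), which agrees with \(-1+3\): the original sum \(\chi(E_0)+\chi(E_1)+\chi(E_2)=-1\) plus one contribution of \(\chi(0)=1\) from each of the three sets. The same argument would give the pattern \((-42,22,22)\) for the sets \(H_i\cup\{0\}\) in \(\Z_8^4\), if that case is needed later.
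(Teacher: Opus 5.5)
Your proof is correct and follows the same approach as the paper: it uses $\chi(0)=1$ to get $\chi(E_i\cup\{0\})=\chi(E_i)+1$, then reads off the pattern $(-10,6,6)$ from Lemma~\ref{lem:partition-character-patterns}. Your explicit remark that $0\notin E_i$, so that the union is disjoint, is a detail the paper leaves implicit.
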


\begin{corollary}
    For a nonprincipal character of \(\Z_8^4\), the character values on $H_i\cup\{0\}$
have pattern $(-42,22,22),$ up to a permutation of the indices.
\end{corollary}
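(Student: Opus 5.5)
The plan is to reduce the statement to Lemma~\ref{lem:partition-character-patterns} together with the additivity of character sums over disjoint unions. Let \(\chi\) be a nonprincipal character of \(\Z_8^4\). Since \(0\notin H_i\) for each \(i\) (the sets \(H_0,H_1,H_2\) partition \(\Z_8^4\setminus\{0\}\)), the union \(H_i\cup\{0\}\) is disjoint. Hence
\[
\chi\bigl(H_i\cup\{0\}\bigr)=\chi(H_i)+\chi(0)=\chi(H_i)+1,
\]
using that every character is a homomorphism and therefore sends the identity to \(1\).

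Next I would invoke the second half of Lemma~\ref{lem:partition-character-patterns}. It gives a unique index \(b\) with \(\chi(H_b)=-43\), while \(\chi(H_i)=21\) for the two indices \(i\neq b\). Adding \(1\) to each value yields \(\chi(H_b\cup\{0\})=-42\) and \(\chi(H_i\cup\{0\})=22\) for \(i\neq b\). This is exactly the pattern \((-42,22,22)\) up to permuting the indices.

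As a sanity check, the three values should sum to \(\chi(\Z_8^4)+2\chi(0)\). Since \(\chi\) is nonprincipal, \(\chi(\Z_8^4)=0\), so the sum should be \(0+2=2\). Indeed \(-42+22+22=2\), so the values are consistent.

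There is no real obstacle here. The only point needing care is confirming that \(0\notin H_i\), so that the character sum over the union splits cleanly. The argument is identical to the one for Corollary~\ref{cor:adjoined-zero-patterns}, with \(-11,5\) replaced by \(-43,21\).
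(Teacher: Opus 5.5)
Your proposal is correct and is essentially the paper's proof: both use $\chi(H_i\cup\{0\})=\chi(H_i)+1$ (valid since $0\notin H_i$ and $\chi(0)=1$) and then read off the pattern $(-42,22,22)$ from Lemma~\ref{lem:partition-character-patterns}. Your check that the three values sum to $2$ is a harmless extra the paper does not include.
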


\begin{proof}
Every character takes the value \(1\) at the identity element.
Therefore,
\[
\chi(E_i\cup\{0\})=\chi(E_i)+1
\]
and
\[
\chi(H_i\cup\{0\})=\chi(H_i)+1.
\]
The result now follows from Lemma
\ref{lem:partition-character-patterns}.
\end{proof}


\section{Main Result}
\label{sec:mainproof}

\begin{theorem}
\label{thm:main}
There exist negative Latin-square-type PDSs with
the following parameters and ambient groups:

\begin{enumerate}[label=(\arabic*)]
    \item a $(16384,5418,1762,1806)$-PDS in
   $\F_{2^6}^+\times\Z_4^4;$

    \item a $(262144,87210,28898,29070)\text{-PDS}
    $
    in
    $\Z_4^4\times\F_{2^{10}}^+;$

    \item a $(4194304,1397418,465122,465806)\text{-PDS}$ in
    $\F_{2^{10}}^+\times\Z_8^4;$

    \item a
    $(67108864,22366890,7452898,7455630)\text{-PDS}$
    in
    $\Z_8^4\times\F_{2^{14}}^+.$
\end{enumerate}
\end{theorem}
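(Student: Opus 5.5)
The plan is to realize each PDS as a generalized Denniston-type union of products: a partition with one character-value pattern in one factor, paired index by index with a partition (with $0$ adjoined) in the other factor. Concretely, I would take
\[
D_1=\bigcup_{i=0}^{2} C_i^{(3,64)}\times\bigl(E_i\cup\{0\}\bigr),\qquad
D_2=\bigcup_{i=0}^{2} E_i\times\bigl(C_i^{(3,1024)}\cup\{0\}\bigr),
\]
\[
D_3=\bigcup_{i=0}^{2} C_i^{(3,1024)}\times\bigl(H_i\cup\{0\}\bigr),\qquad
D_4=\bigcup_{i=0}^{2} H_i\times\bigl(C_i^{(3,16384)}\cup\{0\}\bigr).
\]
The sizes are consistent with the stated $k$. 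For example, $|D_1|=3\cdot 21\cdot 86=5418$ and $|D_2|=3\cdot 85\cdot 342=87210$, and the other two are checked the same way. None of the $D_j$ contains the identity, because the first factor of each product avoids $0$. I would then apply Lemma~\ref{lem:character}. For negative Latin square parameters with $v=n^2$ and $k=r(n+1)$, the two nonprincipal character values are $r$ and $r-n$. This gives $n=128, r=42$ in (1), $n=512, r=170$ in (2), $n=2048, r=682$ in (3), and $n=8192, r=2730$ in (4). Writing each parameter tuple in the form $(n^2,r(n+1),-n+r^2+3r,r^2+r)$ is a routine check.

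Each character of the product group is $\chi=(\psi,\phi)$. For $D_1$ I split into three cases.
\begin{enumerate}
\item If $\psi$ is principal and $\phi$ is not, then $\chi(D_1)=21\sum_i\phi(E_i\cup\{0\})=21(-10+6+6)=42$, by Corollary~\ref{cor:adjoined-zero-patterns}.
\item If $\phi$ is principal and $\psi$ is not, then $\chi(D_1)=86\sum_i\psi(C_i)=86\cdot(-1)=-86$.
\item If both are nonprincipal, let $a$ be the index with $\psi(C_a)=5$ (Lemma~\ref{lem:cyclotomic-character-patterns}) and $b$ the index with $\phi(E_b\cup\{0\})=-10$. If $a=b$, then $\chi(D_1)=5(-10)+2(-3)(6)=-86$. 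If $a\ne b$, then $\chi(D_1)=5\cdot 6+(-3)(-10)+(-3)(6)=42$.
\end{enumerate}
So every nonprincipal character takes a value in $\{42,-86\}=\{r,r-n\}$, and Lemma~\ref{lem:character} gives (1).

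For $D_2$ the roles are swapped, and the same three cases apply.
\begin{enumerate}
\item If $\phi$ (on $\Z_4^4$) is nonprincipal and $\psi$ is principal, then $\chi(D_2)=342\cdot(-1)=-342$.
\item If $\phi$ is principal and $\psi$ is not, then $\chi(D_2)=85\sum_i(\psi(C_i)+1)=85\cdot 2=170$.
\item If both are nonprincipal, the patterns are $(-11,5,5)$ from Lemma~\ref{lem:partition-character-patterns} and $(22,-10,-10)$ for $C_i\cup\{0\}$. Coinciding special indices give $-242-100=-342$, and distinct ones give $110+110-50=170$.
\end{enumerate}

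Cases (3) and (4) are the same computation with the patterns $(21,-11,-11)$ or $(85,-43,-43)$ for the field classes and $(-43,21,21)$ or $(-42,22,22)$ for the $H_i$. I expect the outcomes to be exactly $\{682,-1366\}$ and $\{2730,-5462\}$.

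The only real obstacle is bookkeeping in the mixed case. The argument needs both possible alignments of the special indices, $a=b$ and $a\neq b$, to produce the two target values and nothing else. This is what the arithmetic checks above confirm, and it is why no specific matching between the $C_i$ and the $E_i$ or $H_i$ is required. In (3) and (4) I would simply redo this two-line arithmetic with the larger numbers, and also check that the principal-factor cases give $r$ and $r-n$ in the correct order. That is where a sign slip or an off-by-one in the adjoined zero would show up.
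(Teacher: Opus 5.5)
Your proposal is correct and follows the paper's proof essentially verbatim. It uses the same four product sets $D_1,\dots,D_4$, the same decomposition of characters of the direct product, and the same case split on whether the exceptional indices coincide, with Lemma~\ref{lem:character} finishing the argument. Your observation that negative Latin-square-type parameters force the two nonprincipal values to be $r$ and $r-n$ is just a tidier packaging of the root computations the paper carries out separately for each $D_j$.
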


It is worth noticing that the parameter sets in Theorem~\ref{thm:main} are simultaneously
negative Latin-square-type parameters and generalized Denniston
parameters.

\begin{proof}
Define
\[
D_1
=
\bigcup_{i=0}^{2}
\left(
C_i^{(3,64)}
\times
(E_i\cup\{0\})
\right)
\subseteq
\F_{2^6}^+\times\Z_4^4,
\]
\[
D_2
=
\bigcup_{i=0}^{2}
\left(
E_i
\times
(C_i^{(3,1024)}\cup\{0\})
\right)
\subseteq
\Z_4^4\times\F_{2^{10}}^+,
\]
\[
D_3
=
\bigcup_{i=0}^{2}
\left(
C_i^{(3,1024)}
\times
(H_i\cup\{0\})
\right)
\subseteq
\F_{2^{10}}^+\times\Z_8^4,
\]
and
\[
D_4
=
\bigcup_{i=0}^{2}
\left(
H_i
\times
(C_i^{(3,16384)}\cup\{0\})
\right)
\subseteq
\Z_8^4\times\F_{2^{14}}^+.
\]

The identity is not contained in
any \(D_j\), and each constituent set is closed under additive
inverses.

The cardinalities are
\[
|D_1|
=
3\left(\frac{64-1}{3}\right)(85+1)
=
3(21)(86)
=
5418,
\]
\[
|D_2|
=
3(85)
\left(\frac{1024-1}{3}+1\right)
=
3(85)(342)
=
87210,
\]
\[
|D_3|
=
3\left(\frac{1024-1}{3}\right)(1365+1)
=
3(341)(1366)
=
1397418,
\]
\[
|D_4|
=
3(1365)
\left(\frac{16384-1}{3}+1\right)
=
3(1365)(5462)
=
22366890.
\]

Every character of a direct product is a product character. Thus,
for example, every character of
\(\F_{2^6}^+\times\Z_4^4\)
has the form
\[
\psi\times\varphi,
\]
where \(\psi\) is a character of \(\F_{2^6}^+\) and \(\varphi\)
is a character of \(\Z_4^4\).

The relevant character-value patterns are summarized below. Each
three-term pattern is understood up to a simultaneous permutation of
the indices.

\[
\begin{array}{c|c|c|c|c}
\toprule
\text{Set}
&
\text{First-factor pattern}
&
\text{Second-factor pattern}
&
\text{First size}
&
\text{Second size}
\\
\midrule
D_1
&
(5,-3,-3)
&
(-10,6,6)
&
21
&
86
\\
D_2
&
(-11,5,5)
&
(22,-10,-10)
&
85
&
342
\\
D_3
&
(21,-11,-11)
&
(-42,22,22)
&
341
&
1366
\\
D_4
&
(-43,21,21)
&
(86,-42,-42)
&
1365
&
5462
\\
\bottomrule
\end{array}
\]

We show the complete computation for \(D_1\).

If both \(\psi\) and \(\varphi\) are principal, then
\[
(\psi\times\varphi)(D_1)
=
3(21)(86)
=
5418.
\]

Suppose that \(\psi\) is nonprincipal and \(\varphi\) is principal.
Then
\[
(\psi\times\varphi)(D_1)
=
86(5-3-3)
=
-86.
\]

Suppose that \(\psi\) is principal and \(\varphi\) is nonprincipal.
Then
\[
(\psi\times\varphi)(D_1)
=
21(-10+6+6)
=
42.
\]

Finally, suppose both characters are nonprincipal. If the exceptional
indices of the two character patterns coincide, then
\[
(\psi\times\varphi)(D_1)
=
5(-10)+(-3)6+(-3)6
=
-86.
\]
If the exceptional indices are distinct, then
\[
(\psi\times\varphi)(D_1)
=
5(6)+(-3)(-10)+(-3)6
=
42.
\]

For the proposed parameters of \(D_1\),
\[
\lambda-\mu
=
1762-1806
=
-44
\]
and
\[
(\lambda-\mu)^2+4(k-\mu)
=
(-44)^2+4(5418-1806)
=
16384
=
128^2.
\]
The two nonprincipal character values required by
Lemma~\ref{lem:character} are therefore
\[
\frac{-44+128}{2}=42
\]
and
\[
\frac{-44-128}{2}=-86.
\]
Thus, \(D_1\) is a $(16384,5418,1762,1806)\text{-PDS}.$ Similar calculations demonstrate that  \(D_2,D_3,D_4\) are PDSs. The resulting
character sums are:

\[
\begin{array}{c|c|c|c}
\toprule
\text{Set}
&
\text{Principal value}
&
\multicolumn{2}{c}{\text{Nonprincipal values}}
\\
\midrule
D_1 & 5418     & -86   & 42   \\
D_2 & 87210    & -342  & 170  \\
D_3 & 1397418  & -1366 & 682  \\
D_4 & 22366890 & -5462 & 2730 \\
\bottomrule
\end{array}
\]

For \(D_2\), the required roots are
\[
\frac{28898-29070
\pm
\sqrt{(28898-29070)^2+4(87210-29070)}}{2}
=
170,-342.
\]

For \(D_3\), the required roots are
\[
\frac{465122-465806
\pm
\sqrt{(465122-465806)^2+
4(1397418-465806)}}{2}
=
682,-1366.
\]

For \(D_4\), the required roots are
\[
\frac{7452898-7455630
\pm
\sqrt{(7452898-7455630)^2+
4(22366890-7455630)}}{2}
=
2730,-5462.
\]

Therefore, Lemma~\ref{lem:character} proves that all four sets have
the stated PDS parameters.

It remains to verify that the parameter sets are of negative
Latin-square type. They correspond respectively to
\[
(n,r)=(128,42),\quad
(512,170),\quad
(2048,682),\quad
(8192,2730).
\]
In each case,
\[
v=n^2,\qquad
k=r(n+1),\qquad
\lambda=-n+r^2+3r,\qquad
\mu=r^2+r.
\]
Hence all four PDSs are of negative Latin-square type.
\end{proof}


\section{An Additional Construction}
\label{sec:other}

Polhill, Davis, and Smith constructed a PDS in $\Z_2^8\times\Z_{16}^2$
by combining cubic cyclotomic classes in \(\Z_2^8\) with a partition
of \(\Z_{16}^2\); see Example~4.3 of
\cite{polhillDavisSmith2013}. Their construction uses subsets $J_0,J_1,J_2\subseteq\Z_{16}^2$
that partition the nonzero elements of \(\Z_{16}^2\), with $|J_0|=90, |J_1|=90, |J_2|=75.$

We replace the cubic cyclotomic partition in the first factor with the partition $E_0,E_1,E_2$
of \(\Z_4^4\). This construction helps us reach the following theorem. 

\begin{theorem}
\label{thm:secondMain}
There exists a $(65536,21845,7224,7310)$
negative Latin-square-type PDS in
$\Z_4^4\times\Z_{16}^2.$
\end{theorem}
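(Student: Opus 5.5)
The plan is to mimic the product constructions in the proof of Theorem~\ref{thm:main}, using the partition $E_0,E_1,E_2$ of Theorem~\ref{thm:partitions} in place of the cubic cyclotomic classes of $\Z_2^8$ in Example~4.3 of \cite{polhillDavisSmith2013}. The target parameters fix the shape of the set. They give $\lambda-\mu=-86$ and $(\lambda-\mu)^2+4(k-\mu)=7396+58140=65536=256^2$. By Lemma~\ref{lem:character}, every nonprincipal character sum must therefore lie in $\{85,-171\}$. The sizes must satisfy $85(90+90+75)=21675=k-170$, so the identity of $\Z_{16}^2$ should be adjoined to the two classes of size $90$. I therefore propose
\[
D=\bigl(E_0\times(J_0\cup\{0\})\bigr)\cup\bigl(E_1\times(J_1\cup\{0\})\bigr)\cup\bigl(E_2\times J_2\bigr)\subseteq \Z_4^4\times\Z_{16}^2,
\]
indexed so that $J_2$ is the class of size $75$. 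It has size $85(91+91+75)=21845$, does not contain the identity, and is closed under negation, since each $E_i$ and each $J_i$ is.

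Next I would run the four-case character computation with product characters $\psi\times\varphi$, where $\psi$ is a character of $\Z_4^4$ and $\varphi$ a character of $\Z_{16}^2$.
\begin{itemize}[label={}, leftmargin=0pt]
\item \emph{Both principal.} The sum is $|D|=21845$.
\item \emph{$\varphi$ principal, $\psi$ nonprincipal.} Lemma~\ref{lem:partition-character-patterns} gives the value $91(\psi(E_0)+\psi(E_1))+75\psi(E_2)$. If the exceptional index is $a=2$, this is $910-825=85$. If $a\in\{0,1\}$, it is $-546+375=-171$. Both values are admissible.
\item \emph{$\psi$ principal, $\varphi$ nonprincipal.} The sum is $85\bigl(\varphi(J_0)+\varphi(J_1)+\varphi(J_2)+2\bigr)=85(-1+2)=85$, again admissible.
\end{itemize}

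The main obstacle is the remaining case, where both characters are nonprincipal. The value is
\[
(\psi\times\varphi)(D)=\psi(E_0)\bigl(\varphi(J_0)+1\bigr)+\psi(E_1)\bigl(\varphi(J_1)+1\bigr)+\psi(E_2)\varphi(J_2),
\]
and it requires the exact character-value patterns of $J_0,J_1,J_2$ from \cite{polhillDavisSmith2013}. I would extract from their Example~4.3 the list of possible triples $(\varphi(J_0),\varphi(J_1),\varphi(J_2))$ for nonprincipal $\varphi$. Their verification of the PDS in $\Z_2^8\times\Z_{16}^2$ only used that the cyclotomic classes of $\F_{256}$ have pattern $(-11,5,5)$ up to permutation, with the zero-adjusted index structure as above. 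By Lemma~\ref{lem:partition-character-patterns}, the sets $E_i$ have exactly this pattern. So for each pairing of the exceptional index $a$ of $\psi$ with a pattern of $\varphi$, the computed value is identical to the one in their proof, and hence lies in $\{85,-171\}$. I would check this by tabulating the finitely many combinations, in the same way as for $D_1$.

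One point needs care: whether the cyclotomic case in their example allows every position of the exceptional index relative to the $J$-pattern. The $E$-partition realizes all three positions as $\psi$ varies, so I would confirm directly that each combination gives $85$ or $-171$, rather than relying on the cyclotomic case. With this done, Lemma~\ref{lem:character} gives the $(65536,21845,7224,7310)$ parameters. Finally, taking $(n,r)=(256,85)$ gives $k=85\cdot257$, $\lambda=-256+85^2+255=7224$ and $\mu=85^2+85=7310$, which confirms negative Latin-square type.
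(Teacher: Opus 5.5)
Your proposal is correct and follows essentially the same route as the paper. It uses the same set $D_5$, the same count $85\cdot 257$, the same target values $85$ and $-171$, and the same substitution of $E_0,E_1,E_2$ for the cubic classes of $\F_{256}$ in Example~4.3 of \cite{polhillDavisSmith2013}, with the three easy cases written out more explicitly than the paper does. The positional worry you flag resolves itself: replacing $\psi$ by $x\mapsto\psi(\omega x)$ shifts the cyclotomic pattern cyclically, so the original verification already covers every position of the exceptional index against every character of $\Z_{16}^2$.
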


\begin{proof}
Define
\[
\begin{aligned}
D_5
={}&
\left(E_0\times(J_0\cup\{0\})\right)
\\
&\cup
\left(E_1\times(J_1\cup\{0\})\right)
\\
&\cup
\left(E_2\times J_2\right).
\end{aligned}
\]

Its cardinality is
\[
\begin{aligned}
|D_5|
&=
85(91)+85(91)+85(75)
\\
&=
85(257)
\\
&=
21845.
\end{aligned}
\]

The character values of the partition \(E_0,E_1,E_2\) agree with
the corresponding character values required in the product
construction of \cite{polhillDavisSmith2013}. Substituting the
\(E_i\) for the original first-factor partition therefore gives the
same two nonprincipal character values as in that construction.

For the proposed parameters,
\[
\lambda-\mu
=
7224-7310
=
-86,
\]
and
\[
\begin{aligned}
(\lambda-\mu)^2+4(k-\mu)
&=
(-86)^2+4(21845-7310)
\\
&=
65536
\\
&=
256^2.
\end{aligned}
\]
Thus, the two required nonprincipal character values are $\frac{-86+256}{2}=85$ and $\frac{-86-256}{2}=-171.$ Consequently, \(D_5\) is a $(65536,21845,7224,7310)\text{-PDS}.$\\

Finally, taking $n=256$ and $r=85$, we obtain $v=n^2=65536,$ $k=r(n+1)=85(257)=21845,$
$\lambda=-n+r^2+3r=7224,$ and $\mu=r^2+r=7310.$
Therefore, the parameters are of negative Latin-square type.
\end{proof}



\section{Future Directions}

The constructions in this paper show that certain cyclotomic
partitions in elementary abelian groups can be replaced by partitions
in non-elementary abelian groups having the same character-value
patterns. The main ingredient is the existence of partitions of groups such as $\Z_{2^a}^4$ into three PDSs whose character sums agree with
those of appropriate cubic cyclotomic classes. This motivates the following questions.

\begin{enumerate}
    \item Are there other non-elementary abelian \(p\)-groups whose
    nonzero elements can be partitioned into three equal-size partial
    difference sets with the character-value patterns of cubic
    cyclotomic classes?

    \item Can similar replacement techniques be applied to
    cyclotomic constructions of order greater than three?

    \item Are there nonabelian groups containing partial difference
    sets with the same parameters as the constructions in this
    paper?

    \item Are there nonabelian groups whose nonidentity elements can
    be partitioned into three PDSs with compatible
    character-theoretic or representation-theoretic behavior?

    \item Can the constructions presented here be described through
    a uniform algebraic rule rather than explicit lists or computer
    searches?
\end{enumerate}

Answers to these questions may produce broader families of partial
difference sets in non-elementary abelian and nonabelian groups.


\section*{Acknowledgments}

The authors would like to thank Jim Davis for his guidance and mentorship throughout this project, as well as John Polhill and Eric Swartz for their help and suggestions. Moreover, we would also like to thank Jonathan Jedwab and his team for sharing unpublished results that were used in the constructions in this paper.

%


\appendix

\section{The Partition of \(\Z_4^4\)}
\label{app:Esets}

The following sets partition
\[
\Z_4^4\setminus\{(0,0,0,0)\}.
\]

\begingroup
\scriptsize

\[
E_0=
\left\{
\begin{aligned}
&(2,0,0,0),(0,0,0,2),(0,0,2,2),(0,2,0,2),(2,2,2,2),
(1,0,0,0),(3,0,0,0),(0,0,0,1),(0,0,0,3),\\
&(2,2,3,1),(2,2,1,3),(0,1,2,1),(0,3,2,3),(1,1,3,1),
(3,3,1,3),(1,2,0,0),(3,2,0,0),(2,2,0,1),\\
&(2,2,0,3),(0,0,3,3),(0,0,1,1),(2,3,0,1),(2,1,0,3),
(3,1,1,3),(1,3,3,1),(1,0,0,2),(3,0,0,2),\\
&(0,0,2,3),(0,0,2,1),(2,0,3,3),(2,0,1,1),(2,3,0,3),
(2,1,0,1),(3,1,3,1),(1,3,1,3),(1,0,2,2),\\
&(3,0,2,2),(0,2,0,3),(0,2,0,1),(0,0,1,3),(0,0,3,1),
(2,1,2,1),(2,3,2,3),(1,1,3,3),(3,3,1,1),\\
&(0,1,0,2),(0,3,0,2),(3,1,0,2),(1,3,0,2),(3,1,0,1),
(1,3,0,3),(1,3,1,0),(3,1,3,0),(1,2,3,3),\\
&(3,2,1,1),(0,1,2,2),(0,3,2,2),(3,3,2,2),(1,1,2,2),
(1,1,2,1),(3,3,2,3),(1,1,3,2),(3,3,1,2),\\
&(3,2,3,1),(1,2,1,3),(1,2,3,0),(2,1,1,1),(2,3,3,3),
(1,2,2,1),(3,2,2,3),(0,2,1,2),(0,2,3,2),\\
&(2,1,3,0),(2,3,1,0),(3,2,1,0),(2,0,1,2),(2,0,3,2),
(0,3,3,2),(0,1,1,2),(1,0,3,0),(3,0,1,0),\\
&(0,1,3,3),(0,3,1,1),(1,0,2,1),(3,0,2,3)
\end{aligned}
\right\}.
\]

\[
E_1=
\left\{
\begin{aligned}
&(0,2,0,0),(2,2,0,0),(2,2,0,2),(2,2,2,0),(2,0,2,2),
(1,2,0,2),(3,2,0,2),(2,2,2,3),(2,2,2,1),\\
&(0,2,3,1),(0,2,1,3),(0,1,2,3),(0,3,2,1),(1,1,1,3),
(3,3,3,1),(1,2,2,2),(3,2,2,2),(2,0,0,3),\\
&(2,0,0,1),(2,2,1,1),(2,2,3,3),(0,3,0,1),(0,1,0,3),
(3,1,1,1),(1,3,3,3),(0,1,0,0),(0,3,0,0),\\
&(3,1,2,0),(1,3,2,0),(3,3,0,3),(1,1,0,1),(3,1,3,2),
(1,3,1,2),(3,2,3,3),(1,2,1,1),(0,1,2,0),\\
&(0,3,2,0),(3,3,0,0),(1,1,0,0),(1,3,2,3),(3,1,2,1),
(3,3,1,0),(1,1,3,0),(1,2,3,1),(3,2,1,3),\\
&(2,1,0,0),(2,3,0,0),(3,1,2,2),(1,3,2,2),(3,3,2,1),
(1,1,2,3),(3,3,3,0),(1,1,1,0),(1,0,1,1),\\
&(3,0,3,3),(2,1,0,2),(2,3,0,2),(3,1,0,0),(1,3,0,0),
(3,1,2,3),(1,3,2,1),(1,1,1,2),(3,3,3,2),\\
&(3,0,1,1),(1,0,3,3),(2,2,1,0),(2,2,3,0),(2,1,1,0),
(2,3,3,0),(3,0,3,0),(1,0,1,0),(0,1,3,1),\\
&(0,3,1,3),(1,0,0,3),(3,0,0,1),(2,2,1,2),(2,2,3,2),
(2,1,3,2),(2,3,1,2),(3,2,3,2),(1,2,1,2),\\
&(2,3,1,3),(2,1,3,1),(3,0,0,3),(1,0,0,1)
\end{aligned}
\right\}.
\]

\[
E_2=
\left\{
\begin{aligned}
&(0,0,2,0),(0,2,2,0),(2,0,2,0),(0,2,2,2),(2,0,0,2),
(1,0,2,0),(3,0,2,0),(0,2,2,1),(0,2,2,3),\\
&(0,2,1,1),(0,2,3,3),(0,3,0,3),(0,1,0,1),(3,1,3,3),
(1,3,1,1),(1,2,2,0),(3,2,2,0),(2,0,2,1),\\
&(2,0,2,3),(2,0,1,3),(2,0,3,1),(2,1,2,3),(2,3,2,1),
(1,1,1,1),(3,3,3,3),(2,1,2,0),(2,3,2,0),\\
&(3,3,0,2),(1,1,0,2),(1,3,0,1),(3,1,0,3),(3,1,1,2),
(1,3,3,2),(3,0,1,3),(1,0,3,1),(2,1,2,2),\\
&(2,3,2,2),(3,3,2,0),(1,1,2,0),(1,1,0,3),(3,3,0,1),
(1,3,3,0),(3,1,1,0),(1,0,1,3),(3,0,3,1),\\
&(0,0,1,0),(0,0,3,0),(0,3,1,2),(0,1,3,2),(1,2,1,0),
(3,2,3,0),(2,1,1,3),(2,3,3,1),(1,2,0,3),\\
&(3,2,0,1),(0,0,1,2),(0,0,3,2),(0,3,3,0),(0,1,1,0),
(1,0,1,2),(3,0,3,2),(0,3,3,1),(0,1,1,3),\\
&(3,2,0,3),(1,2,0,1),(0,2,1,0),(0,2,3,0),(2,1,1,2),
(2,3,3,2),(3,0,1,2),(1,0,3,2),(0,3,3,3),\\
&(0,1,1,1),(3,2,2,1),(1,2,2,3),(2,0,1,0),(2,0,3,0),
(0,3,1,0),(0,1,3,0),(1,2,3,2),(3,2,1,2),\\
&(2,3,1,1),(2,1,3,3),(3,0,2,1),(1,0,2,3)
\end{aligned}
\right\}.
\]

\endgroup


\end{document}